\begin{document}

\newtheorem{thm}{Theorem}
\newtheorem{lem}[thm]{Lemma}
\newtheorem{claim}[thm]{Claim}
\newtheorem{cor}[thm]{Corollary}
\newtheorem{prop}[thm]{Proposition} 
\newtheorem{definition}[thm]{Definition}
\newtheorem{question}[thm]{Open Question}
\newtheorem{conj}[thm]{Conjecture}
\newtheorem{rem}[thm]{Remark}
\newtheorem{prob}{Problem}

\def\ccr#1{{#1}}
\def\cco#1{{#1}}
\def\ccg#1{{#1}}

\newtheorem{ass}[thm]{Assumption}

\newtheorem{lemma}[thm]{Lemma}

\newcommand{\GL}{\operatorname{GL}}
\newcommand{\SL}{\operatorname{SL}}
\newcommand{\lcm}{\operatorname{lcm}}
\newcommand{\ord}{\operatorname{ord}}
\newcommand{\Tr}{\operatorname{Tr}}
\newcommand{\Span}{\operatorname{Span}}
\newcommand{\Spec}{\operatorname{Spec}}
\newcommand{\charpoly}{\operatorname{char}}

\numberwithin{equation}{section}
\numberwithin{thm}{section}
\numberwithin{table}{section}

\def\vol {{\mathrm{vol\,}}}
\def\squareforqed{\hbox{\rlap{$\sqcap$}$\sqcup$}}
\def\qed{\ifmmode\squareforqed\else{\unskip\nobreak\hfil
\penalty50\hskip1em\null\nobreak\hfil\squareforqed
\parfillskip=0pt\finalhyphendemerits=0\endgraf}\fi}

\def \balpha{\bm{\alpha}}
\def \bbeta{\bm{\beta}}
\def \bgamma{\bm{\gamma}}
\def \blambda{\bm{\lambda}}
\def \bchi{\bm{\chi}}
\def \bphi{\bm{\varphi}}
\def \bpsi{\bm{\psi}}
\def \bomega{\bm{\omega}}
\def \btheta{\bm{\vartheta}}
\def \bmu{\bm{\mu}}
\def \bnu{\bm{\nu}}

\newcommand{\bfxi}{{\boldsymbol{\xi}}}
\newcommand{\bfrho}{{\boldsymbol{\rho}}}

\def\cA{{\mathcal A}}
\def\cB{{\mathcal B}}
\def\cC{{\mathcal C}}
\def\cD{{\mathcal D}}
\def\cE{{\mathcal E}}
\def\cF{{\mathcal F}}
\def\cG{{\mathcal G}}
\def\cH{{\mathcal H}}
\def\cI{{\mathcal I}}
\def\cJ{{\mathcal J}}
\def\cK{{\mathcal K}}
\def\cL{{\mathcal L}}
\def\cM{{\mathcal M}}
\def\cN{{\mathcal N}}
\def\cO{{\mathcal O}}
\def\cP{{\mathcal P}}
\def\cQ{{\mathcal Q}}
\def\cR{{\mathcal R}}
\def\cS{{\mathcal S}}
\def\cT{{\mathcal T}}
\def\cU{{\mathcal U}}
\def\cV{{\mathcal V}}
\def\cW{{\mathcal W}}
\def\cX{{\mathcal X}}
\def\cY{{\mathcal Y}}
\def\cZ{{\mathcal Z}}
\def\Ker{{\mathrm{Ker}}}

\def\sA{{\mathscr A}}

\def\NmQR{N(m;Q,R)}
\def\VmQR{\cV(m;Q,R)}

\def\Xm{\cX_m}

\def \A {{\mathbb A}}
\def \B {{\mathbb A}}
\def \C {{\mathbb C}}
\def \F {{\mathbb F}}
\def \G {{\mathbb G}}
\def \L {{\mathbb L}}
\def \K {{\mathbb K}}
\def \Q {{\mathbb Q}}
\def \R {{\mathbb R}}
\def \Z {{\mathbb Z}}

\def \fA{\mathfrak A}
\def \fC{\mathfrak C}
\def \fL{\mathfrak L}
\def \fR{\mathfrak R}
\def \fS{\mathfrak S}

\def \fUg{{\mathfrak U}_{\mathrm{good}}}
\def \fUm{{\mathfrak U}_{\mathrm{med}}}
\def \fV{{\mathfrak V}}
\def \fG{\mathfrak G}
\def \f{\mathfrak G}

\def\e{{\mathbf{\,e}}}
\def\ep{{\mathbf{\,e}}_p}
\def\eq{{\mathbf{\,e}}_q}

 \def\\{\cr}
\def\({\left(}
\def\){\right)}
\def\fl#1{\left\lfloor#1\right\rfloor}
\def\rf#1{\left\lceil#1\right\rceil}

\def\Im{{\mathrm{Im}}}

\def \oF {\overline \F}

\newcommand{\pfrac}[2]{{\left(\frac{#1}{#2}\right)}}

\def \Prob{{\mathrm {}}}
\def\e{\mathbf{e}}
\def\ep{{\mathbf{\,e}}_p}
\def\epp{{\mathbf{\,e}}_{p^2}}
\def\em{{\mathbf{\,e}}_m}

\def\Res{\mathrm{Res}}
\def\Orb{\mathrm{Orb}}

\def\vec#1{\mathbf{#1}}
\def \va{\vec{a}}
\def \vb{\vec{b}}
\def \vc{\vec{c}}
\def \vs{\vec{s}}
\def \vu{\vec{u}}
\def \vv{\vec{v}}
\def \vw{\vec{w}}
\def\vlam{\vec{\lambda}}
\def\flp#1{{\left\langle#1\right\rangle}_p}

\def\mand{\qquad\mbox{and}\qquad}

\title[Number of characteristic polynomials]{Number of characteristic polynomials of matrices with bounded height}

% \author[I.~E.~Shparlinski]{Igor E. Shparlinski}
% \address{I.E.S.: School of Mathematics and Statistics, University of New South Wales.
% Sydney, NSW 2052, Australia}
% \email{igor.shparlinski@unsw.edu.au}

  \author[L. M{\'e}rai]{L{\'a}szl{\'o} M{\'e}rai}
 \address{L.M.:  Department of Computer Algebra, ELTE
Eötvös Loránd University, Budapest,
Hungary}
 \email{merai@inf.elte.hu}

%%\author[A. Ostafe] {Alina Ostafe}
%%\address{School of Mathematics and Statistics, 
%%University of New South Wales, Sydney NSW 2052, Australia}
%%\email{alina.ostafe@unsw.edu.au}

\author[I. E. Shparlinski] {Igor E. Shparlinski}
\address{School of Mathematics and Statistics, University of New South Wales, Sydney NSW 2052, Australia}
\email{igor.shparlinski@unsw.edu.au}

\begin{abstract}  We consider the set  $\cM_n\(\Z; H\)$ of $n\times n$-matrices with 
integer elements of size at most $H$ and obtain  upper and lower bounds on the number of
distinct irreducible characteristic polynomials which correspond to these matrices and thus
on the number of distinct eigenvalues of  these matrices. In particular, we improve
some results of  A.~Abrams, Z.~Landau, J.~Pommersheim and N.~Srivastava (2022).
\end{abstract}

\subjclass[2020]{11C20, 15B36, 15B52}

\keywords{Matrices, multiplicative dependence, matrix equation, abelianisation}

\maketitle

%\tableofcontents

%---------------------------------------------------------------------
\section{Introduction}

For a positive integer $n$  we use $\cM_n\(\Z\)$ to  denote the
set of all  $n\times n$ matrices with integer entries. 
Furthermore, for  an integer $H\ge 1$ we use $\cM_n\(\Z; H\)$  to denote the
set  of matrices
\[
A = (a_{ij})_{i,j=1}^n \in \cM_n\(\Z\)
\]  
with integer entries of size $|a_{ij}| \le H$.  In particular, $\cM_n\(\Z; H\)$ is 
of cardinality $\# \cM_n\(\Z; H\) = \(2H +1\)^{n^2}$. 

Motivated by recent work of  Abrams, Landau, Pommersheim and  Srivastava~\cite{ALPS}, 
we consider the following quantities 
 \begin{itemize}
\item $\cI_n(H)$, which is the set of all   irreducible characteristic polynomial corresponding 
to matrices $A \in \cM_n\(\Z;H\)$
;
\item $\Spec(n, H)$, which is the set of distinct eigenvalues of matrices $A \in \cM_n\(\Z;H\)$. 
\end{itemize}
We note that in turn the work~\cite{ALPS} has been motivated by applications to stability analysis of some numerical
linear algebra algorithms. However we believe that counting problems of this type with integral matrices 
are of independent interest, we refer to~\cite{HOS} for some recent results and further references. 

In particular, we recall that by~\cite[Theorem~1.3]{ALPS}, for any integer $k \ge 0$ and 
$n = 2^k+1$ we have 
\begin{equation}
\label{eq:ALPS Bound}
\# \Spec(n, H) \ge \frac{n}{5^{n}}  H^{(n-1)^2/4} . 
\end{equation}
Since it is easy to show that $\# \Spec(n, H)$ is a monotonic function of $n$, for a fixed $n$ of the form $n=2^k$ (which is the worst 
choice) this leads to a lower bound of order $H^{n^2/16}$. This result is based on showing that the set  $\cI_n(H)$ contains
sufficiently many polynomials. 

Here we supplement the argument of~\cite{ALPS} with some other ideas, which allows us to improve the bound~\eqref{eq:ALPS Bound}
and also extend it to arbitrary odd integer $n = 2k+1$.  

 In fact, as in~\cite{ALPS} our lower bounds apply to a smaller sets
$\cI_n^+(H)$ and $\Spec^+(n, H)$ 
which are defined as $\cI_n(H)$ and $\Spec(n, H)$ but with respect to the matrices 
$A \in \cM_n^+\(\Z;H\)$, where $\cM_n^+\(\Z;H\)\subseteq \cM_n\(\Z;H\)$ is the
set of matrices with integer entries $a_{ij} \in \{0, \ldots, H\}$.

We obtain lower bounds on the cardinality $\cI_n^+(H)$ which we then combine with the trivial inequality 
$$
\# \Spec(n, H) \ge \# \Spec^+(n, H) \ge n \# \cI_n^+(H).
$$
In particular, for any  fixed $n$,  our lower bounds give $(n-1)^2/4$ as the exponent of $H$ 
improving on $n^2/16$, which is implied by the argument of~\cite{ALPS} in the worst case. 
We also
significantly improve the coefficients depending on $n$.

We start with a result which applies when $H$ is much bigger that $n$. 
However, note that do not assume that  $n$ is fixed, and in the implied constant 
in the `$O$'-symbol is absolute. 

\begin{thm}\label{thm:InH-asym} For any odd $n\ge 11$ and $H\to \infty$, 
we have
$$
 \# \cI_n^+(H) \ge  \frac{1}{n}   H^{(n-1)^2/4} + O\( H^{(n-1)^2/4 - (n-9)/(n-5)} \).
 %% O\( H^{(n-1)^2/4 - (n+1)/(n+5)} \). 
 $$
\end{thm}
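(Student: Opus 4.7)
The plan is to exhibit a family $\cF \subseteq \cM_n^+\(\Z; H\)$ of matrices parametrized by exactly $(n-1)^2/4 = k^2$ free integer entries (writing $n = 2k+1$), each ranging over $\{0, 1, \ldots, H\}$, so that $\#\cF = (H+1)^{k^2} = H^{(n-1)^2/4} + O\(H^{(n-1)^2/4 - 1}\)$. A natural candidate is a family of sparse matrices built from a Hessenberg- or companion-like skeleton together with a $k \times k$ block of free entries, chosen to admit a natural $\Z/n\Z$-action (for instance, conjugation by a cyclic permutation matrix) that preserves both $\cF$ and characteristic polynomials.

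The next step is to analyze the map $\va \mapsto \chi_A(x) = x^n + \sum_{j=0}^{n-1}c_j(\va)\,x^j$ from the parameter space $\Z^{k^2}$ to monic degree-$n$ polynomials in $\Z[x]$. The $\Z/n\Z$-symmetry forces $\chi_A$ to be constant along orbits of size $n$, so away from a subvariety of smaller dimension this map has fibers of size exactly $n$; this is the source of the factor $\tfrac{1}{n}$ in the main term. To count irreducible specializations, I would establish that $\chi_A(x)$ is absolutely irreducible as a polynomial in $x$ over the function field $\Q(\va)$, with Galois group as large as possible (ideally the symmetric group $S_n$, or at least primitive). A quantitative Hilbert irreducibility estimate in $k^2$ variables---either along the lines of the classical bounds of S.~D.~Cohen and Gallagher, or via a reduction-mod-$p$ argument exhibiting small primes $p$ for which $\chi_A(x) \bmod p$ is already irreducible in $\F_p[x]$---would then bound the number of $\va \in \{0,\ldots,H\}^{k^2}$ for which $\chi_A$ factors nontrivially over $\Q$ by $O\(H^{(n-1)^2/4 - (n-9)/(n-5)}\)$. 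The particular exponent $(n-9)/(n-5)$ should emerge from optimizing a point-counting estimate on the discriminant-like exceptional locus against the distribution of $\va$ over rings of residues, and the hypothesis $n \geq 11$ is where this saving first becomes positive.

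Combining the cardinality of $\cF$, the generic $n$-to-one fiber structure, and the upper bound on reducible specializations then yields the claimed inequality. The main obstacle is the quantitative irreducibility step: one must both identify the Galois-theoretic structure of $\chi_A$ over $\Q(\va)$ for the chosen family $\cF$, and then produce a sharp enough counting bound for reducible specializations, whose precise numerical strength determines the final error exponent. Controlling the smaller locus on which the generic $n$-to-one fiber structure degenerates (and verifying that it contributes no more than the stated error) is a necessary but secondary technical issue, which should be absorbable into the $O$-term.
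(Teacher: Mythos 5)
Your outline diverges from the paper in a way that leaves the central quantitative step unproved, and the mechanism you propose for the factor $\frac{1}{n}$ is not the one that actually works here. In the paper the family is the set $\cB$ of structured lower Hessenberg matrices from Abrams--Landau--Pommersheim--Srivastava, and the key point (their Theorem~3.1, quoted as Lemma~\ref{lem: B<->F}) is that $\charpoly$ is a \emph{bijection} from $\cB$ onto an explicit set $\cF$ of roughly $H^{k^2}$ monic polynomials with prescribed coefficient ranges and divisibilities (writing $n=2k+1$). There is no $n$-to-one fiber structure and no group action: each matrix gives a distinct polynomial, so collapsing fibers by a symmetry would only shrink the image and work against you; moreover, for a Hessenberg skeleton, conjugation by a cyclic permutation does not preserve the family. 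The factor $\frac{1}{n}$ instead comes from the density of irreducible polynomials of degree $n$: one fixes a prime $p\nmid H$, counts irreducible polynomials over $\F_p$ of degree $n$ with vanishing coefficients of $X^{n-1}$ and $X^{n-2}$ via Pollack's theorem (about $p^{n-2}/n$ of them), lifts each to integer polynomials lying in $\cF$, and observes that any such lift is automatically irreducible over $\Z$. The exponent $(n-9)/(n-5)$ then falls out of balancing the error in Pollack's count against the rounding losses in the lifting, by taking $p$ of size about $H^{2/(k-2)}$.

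The second gap is the irreducibility count itself. You defer it to a quantitative Hilbert irreducibility theorem in $k^2$ variables, but (a) you have not established the generic irreducibility or Galois group of $\chi_A$ over $\Q(\va)$ for any concrete family, and (b) even granting that, Cohen--Gallagher-type bounds give a reducible locus of size $O\(H^{k^2-1/2}\log H\)$, which exceeds the claimed error term $H^{k^2-(n-9)/(n-5)}$ once $n\ge 15$, so the stated theorem would not follow from that route. Your parenthetical alternative --- finding a small prime $p$ for which the reduction mod $p$ is irreducible --- is in fact the germ of the correct argument, but it must be run in the opposite direction: rather than using it to bound an exceptional set of reducible specializations, one counts irreducibles mod $p$ first and lifts them into $\cF$, and it is this lifting count that simultaneously produces the main term $\frac{1}{n}H^{(n-1)^2/4}$ and the specific error exponent.
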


Theorem~\ref{thm:InH-asym} is nontrivial when $H \ge Cn$ for some absolute constant $C>0$. We also have 
the following fully explicit estimate, which applies to any $n\ge 5$ and $H \ge 3$.
It seems that no such estimate  has been available prior this work.

\begin{thm}\label{thm:InH-expl} 
 For  any  odd integers $n\ge 5$ and $H\ge 3$, 
we have
$$
 \# \cI_n^+(H) \ge    \frac{1}{4n} \( H-1\)^{(n-1)^2/4} . 
 $$
\end{thm}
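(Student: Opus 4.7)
The plan is to adapt the proof of Theorem~\ref{thm:InH-asym}, replacing its asymptotic analysis with estimates in which every constant is explicit. The argument has three ingredients: (a) a parametric family of matrices in $\cM_n^+(\Z;H)$ carrying $(n-1)^2/4 = k^2$ free parameters (writing $n = 2k+1$); (b) an explicit upper bound on the number of parameter choices producing a reducible characteristic polynomial; and (c) a bound on the fibre size of the map from parameters to characteristic polynomials.

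For (a) and (c) I would reuse the family $\cF\subseteq \cM_n^+(\Z;H)$ already used in the proof of Theorem~\ref{thm:InH-asym}, now restricted to indices $\vec t =(t_1,\ldots,t_{k^2})\in\{1,\ldots,H-1\}^{k^2}$, so that $|\cF|=(H-1)^{k^2}$, and check that the induced map $\vec t\mapsto f_{\vec t}\in\Z[x]$ has fibres of cardinality at most $n$. This already produces at least $(H-1)^{k^2}/n$ distinct characteristic polynomials.

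For (b), the idea is as follows. Any reducible $f_{\vec t}$ admits a monic divisor $g\in\Z[x]$ of degree $d\in\{1,\ldots,k\}$ whose coefficients are controlled by Mignotte's inequality in terms of those of $f_{\vec t}$, and hence polynomially in $H$. This gives an explicit polynomial-in-$H$ list of candidate factors $g$ for each $d$. For each candidate $g$ the condition $g\mid f_{\vec t}$ is a system of polynomial equations in $\vec t$ of controlled degree, whose integer solutions in $\{1,\ldots,H-1\}^{k^2}$ are bounded via a Schwartz--Zippel-type argument. Summing these bounds over $d$ and over the candidate $g$'s, and invoking $n\ge 5$ (so that $k^2\ge 4$) together with $H\ge 3$, one verifies that the total number of reducible parameter tuples is at most $\tfrac12(H-1)^{k^2}$.

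The main obstacle is keeping step (b) effective down to $H=3$: in this regime Mignotte's bound is close to trivial and the Schwartz--Zippel savings are small, so the case $d=1$ (where $g=x-r$ and a rational root $r$ must divide $f_{\vec t}(0)=\pm\det A$, a specific integer of size at most $n!H^n$) must be handled separately via the classical divisor bound, while the higher-degree contributions $d\ge 2$ are absorbed in the $k^2\ge 4$ free dimensions. Combining (a), (b), (c) then yields at least $(H-1)^{k^2}/(2n)$ distinct irreducible characteristic polynomials, as required.
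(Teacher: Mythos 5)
Your proposal is correct in step (a) but goes a genuinely different --- and, as sketched, unworkable --- route in step (b). The paper never tries to bound the number of reducible polynomials in the family directly. Instead it reduces modulo $p=2$: it counts monic irreducible polynomials over $\F_2$ of degree $n$ with vanishing coefficients of $X^{n-1}$ and $X^{n-2}$ (Corollary~\ref{cor:Irred F2} gives at least $2^n/(7n)$ of them), and lifts each one into $\cF$; every lift is automatically irreducible over $\Z$, and since $H$ is odd the number of admissible lifts of each coefficient is exactly $(H^{j-1}-1)/2$, respectively $(H^{k-j+1}-1)/2$. The factor $1/(2n)$ in the statement is precisely the density $\sim 1/n$ of irreducibles over $\F_2$ after bookkeeping; note also that Lemma~\ref{lem: B<->F} makes the parameter-to-polynomial map a bijection, so your fibre bound of $n$ is unnecessary (though harmless).

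The gap is in step (b). You claim that at most half of the $(H-1)^{k^2}$ parameter tuples give a reducible polynomial, uniformly for $H\ge 3$ and all odd $n\ge 5$. That is much stronger than what the theorem requires (a proportion $1/(2n)$ of irreducibles suffices), and the tools you invoke cannot deliver it. Mignotte's inequality bounds the height of a monic degree-$d$ factor $g$ only in terms of the height of $f_{\vec{t}}$, which for polynomials in $\cF$ is of order $H^{k}$ (the middle coefficients range over intervals of length up to $H^{k}$); so the candidate list for $g$ has size roughly $(2^nH^{k})^{d}$, while a Schwartz--Zippel bound for the $d$ divisibility conditions saves at most a factor of order $(H-1)^{-d}$. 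The resulting union bound exceeds $(H-1)^{k^2}$ by a factor of order $H^{(k-1)d}$, and it already fails at $d=1$: there are $\sim H^{k}$ candidate integer roots $r$, each of which can kill a hypersurface of $\sim k(H-1)^{k^2-1}$ tuples, and the divisor bound you propose controls the number of roots of a \emph{single} $f_{\vec{t}}$, not the number of $\vec{t}$ admitting some root. More structurally, establishing that a proportion bounded away from $1/n$ of such a family is irreducible when $H$ is fixed and small and $n$ grows is essentially the regime where irreducibility of random polynomials with restricted coefficients is open --- compare the paper's closing remark that the Pham--Xu result only applies when $n\le(1-\varepsilon)\log H/\log 2$. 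To get an unconditional explicit bound down to $H=3$ you should certify irreducibility by reduction modulo $2$, as the paper does, rather than by excluding factorizations over $\Z$.
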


We conclude with the observation that the question of estimating $ \# \cI_n^+(H) $ is very 
different from a question of counting matrices from $\cM_n\(\Z; H\)$ with an irreducible characteristic 
polynomial, which can be treated via explicit versions of the Hilbert Irreducibility Theorem, 
see,~for example,~\cite{B-SGa} and references therein. 

\section{Families of matrices with explicit characteristic  polynomials}
\label{sec: Set F}
Let $\cB\subseteq [0,H]^{(2k+1)\times (2k+1)}$ be the set of lower Hessenberg matrices given by the matrices
$$
B=(b_{i,j})_{i\in [-k,k], \, j\in [-k,k]}
$$
(where it is convenient to run both indices from $-k$ to $k$) such that 
\begin{itemize}
    \item $
    b_{i,i+1}=
    \left\{
\begin{array}{cl}
   1  & \text{for } i\in [-k,0];\\
   H  & \text{for } i\in [1,k-1];
\end{array}
    \right.
    $ 
\item $b_{i,j}\in \{0,1,\dots, H-1\}$ for $i\in [1,k]$ and $j\in [-k,-1]$;  
\item all other terms are zero.
\end{itemize}

That is, $B$ has
\begin{itemize}
\item zeros on the main diagonal;
\item $k+1$ values of $1$ and then $k-1$ values of $H$ 
on the diagonal above the main diagonal;
\item an arbitrary $k\times k$ matrix in left bottom corner, with entries from $\{0,1,\dots, H-1\}$.
\end{itemize}

%%$$
%%\begin{pmatrix} 0 & 1 &  & & & & & \\
%%                          &  0 & 1 &  & & & &\\
%%                          &   & \ddots & \ddots & & & &
%%\end{pmatrix} 
%%$$

Define the following set $\cF\subseteq \Z[X]$ of polynomials of degree $n = 2k+1$: 
$$
f = X^{2k+1}-f_{2k-2}X^{2k-2} - f_{2k-3}X^{2k-3}-\dots - f_1X-f_0
$$
with coefficients satisfying the following conditions on their size and 
divisibility: 
\begin{itemize}
    \item $f_{2k}=f_{2k-1}=0$,
    \item $f_{2k-j}\in\{0,1,\dots, H^{j-1}-1 \}$ for $j\in [2,k+1]$,
    \item $f_{k-j}/H^{j-1}\in\{0,1,\dots,H^{k-j+1}-1\}$ for $j\in [2,k]$.
\end{itemize}

We have, see~\cite[Theorem~3.1]{ALPS}, the following one-to-one correspondence. 
Let $\charpoly(M)$ denote the characteristic polynomial of  matrix $M \in  \Z^{n\times n}$. Then we have
the following connection between $\cB$ and $\cF$.

\begin{lemma}\label{lem: B<->F}
The map $\charpoly: \Z^{n \times n}\to  \Z[X]$
acts as a bijection from $\cB$ to $\cF$.
\end{lemma}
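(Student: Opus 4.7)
The strategy is to compute $\charpoly(B)$ explicitly via the Leibniz expansion of $\det(XI-B)$ and to read the resulting coefficients as base-$H$ digits. A direct count gives $\#\cB = H^{k^2} = \#\cF$, so it will suffice to exhibit the map $\charpoly\colon \cB \to \cF$ as injective by inverting it explicitly.

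The key structural observation is that the only non-zero off-diagonal entries of $B$ lie either on the superdiagonal, $b_{i,i+1}$, or in the lower-left block $b_{i,j}$ with $i\in[1,k]$ and $j\in[-k,-1]$. Hence a permutation $\sigma$ contributes a non-zero term to $\det(XI-B)$ only if each non-trivial cycle of $\sigma$ has the form
$j_0\to j_0{+}1\to\cdots\to i_0\to j_0$
for some $j_0\in[-k,-1]$ and $i_0\in[1,k]$, with a single ``return'' edge from the lower-left block and the remaining moves forced along the superdiagonal. In particular every such cycle passes through the vertex $0$, so \emph{at most one} non-trivial cycle can appear in a single $\sigma$.

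Gathering the weight of the cycle indexed by $(i_0,j_0)$: the $(1-j_0)$ superdiagonal shifts in $[-k,0]$ contribute $(-1)^{1-j_0}$, the $(i_0-1)$ shifts in $[1,k-1]$ contribute $(-H)^{i_0-1}$, the return edge contributes $-b_{i_0,j_0}$, the $2k-(i_0-j_0)$ fixed points contribute $X^{2k-i_0+j_0}$, and the cycle signature is $(-1)^{i_0-j_0}$. After the signs collapse one obtains
\[
\charpoly(B) \;=\; X^{2k+1} \;-\; \sum_{i_0=1}^{k}\sum_{j_0=-k}^{-1} H^{i_0-1}\,b_{i_0,j_0}\,X^{2k-i_0+j_0},
\]
which in particular gives $f_{2k}=f_{2k-1}=0$, matching the first condition of $\cF$.

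To conclude, I would group the coefficients by the anti-diagonal $s=i_0-j_0\in[2,2k]$, writing $f_{2k-s} = \sum_{i_0} H^{i_0-1}\, b_{i_0,\,i_0-s}$ with $i_0$ ranging over $[\max(1,s-k),\min(k,s-1)]$. This sum is literally the base-$H$ expansion of $f_{2k-s}$ for $s\in[2,k+1]$ and of $f_{2k-s}/H^{s-k-1}$ for $s\in[k+2,2k]$, with digits $b_{i_0,j_0}\in\{0,\dots,H-1\}$. This reproduces precisely the ranges and divisibility conditions defining $\cF$, and since the anti-diagonals partition the lower-left entries of $B$, the digits $b_{i_0,j_0}$ are uniquely recovered from the coefficients of $\charpoly(B)$, yielding the desired bijection. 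The only point requiring real care is the sign bookkeeping in the cycle weight; everything else amounts to reading off a base-$H$ representation one anti-diagonal at a time.
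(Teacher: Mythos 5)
Your argument is correct, but note that the paper does not actually prove this lemma: it is quoted verbatim from Abrams, Landau, Pommersheim and Srivastava~\cite[Theorem~3.1]{ALPS}, so there is no in-text proof to compare against. What you supply is a self-contained verification via the Leibniz expansion, and the key combinatorial point --- that every contributing cycle must contain exactly one ``return'' edge from the lower-left block and must pass through the index $0$, so that at most one non-trivial cycle occurs per permutation --- is exactly right and is what makes the computation close. Your sign bookkeeping checks out: the $(1-j_0)+(i_0-1)$ superdiagonal factors, the return edge, and the cycle signature $(-1)^{i_0-j_0}$ combine to an overall $-1$, giving $\charpoly(B)=X^{2k+1}-\sum_{i_0,j_0}H^{i_0-1}b_{i_0,j_0}X^{2k-i_0+j_0}$, and the grouping by anti-diagonals $s=i_0-j_0\in[2,2k]$ correctly reproduces the size condition $f_{2k-s}\in\{0,\dots,H^{s-1}-1\}$ for $s\in[2,k+1]$ and the divisibility-plus-size condition for $s\in[k+2,2k]$ (after the substitution $j=s-k$ this matches the condition on $f_{k-j}/H^{j-1}$ in the definition of $\cF$). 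Since the anti-diagonals partition the $k\times k$ block and base-$H$ digits are unique, injectivity and surjectivity both follow, and the count $\#\cB=H^{k^2}=\#\cF$ is a useful consistency check even if not logically needed. This is presumably the same computation carried out in~\cite{ALPS}; the value of your write-up is that it makes the lemma independent of that reference.
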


\section{Irreducible polynomials over finite fields}

Let $\pi_q(n)$ denote the number of monic irreducible polynomials of degree $n$ 
over the finite field $\F_q$ of $q$ elements.  It is well know that  $\pi_q(n)$ 
is of order $q^n/n$. We however require some explicit bounds, which are conveniently 
presented by Pollack~\cite[Lemma~4]{Pol}.

\begin{lemma}\label{lem:PNT-Fq}
 For each prime power $q$ and each integer $n\ge 1$, we have
$$
\frac{q^n}{n} -  2\frac{q^{n/2}}{n}  \le \pi_q(n)\le \frac{q^n}{n}.
$$
\end{lemma}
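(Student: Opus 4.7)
The plan is to use the classical Möbius/Gauss formula
$$
\pi_q(n) = \frac{1}{n}\sum_{d\mid n} \mu(d)\, q^{n/d},
$$
which follows from the identity $q^n = \sum_{d\mid n} d\,\pi_q(d)$ (since $X^{q^n}-X$ factors as the product of all monic irreducibles over $\F_q$ whose degree divides $n$) together with Möbius inversion.

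For the upper bound, I would simply observe that $q^n = \sum_{d\mid n} d\,\pi_q(d) \ge n\,\pi_q(n)$, giving $\pi_q(n)\le q^n/n$ at once, with no use of Möbius inversion needed.

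For the lower bound, I would isolate the $d=1$ term in the Möbius formula to write
$$
n\,\pi_q(n) = q^n + \sum_{\substack{d\mid n\\ d>1}} \mu(d)\,q^{n/d},
$$
and then estimate the tail by absolute values. Since every proper divisor $d>1$ of $n$ satisfies $n/d\le n/2$, the tail is bounded by
$$
\Biggl|\sum_{\substack{d\mid n\\ d>1}} \mu(d)\,q^{n/d}\Biggr|
\le \sum_{m=1}^{\lfloor n/2\rfloor} q^m
= \frac{q^{\lfloor n/2\rfloor+1}-q}{q-1}
\le \frac{q}{q-1}\,q^{n/2}
\le 2\, q^{n/2},
$$
using $q\ge 2$ in the last step. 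Combining this with the previous display yields $n\,\pi_q(n)\ge q^n-2q^{n/2}$, which after division by $n$ is exactly the claimed lower bound.

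There is no real obstacle here; the only minor point to watch is that $\lfloor n/2\rfloor$ is used rather than $n/2$ when $n$ is odd, and that the factor $q/(q-1)$ is tamed by the hypothesis $q\ge 2$ (always valid for a prime power) in order to produce the clean constant $2$ in the stated inequality.
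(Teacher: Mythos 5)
Your proof is correct and complete. The paper does not actually prove this lemma --- it is quoted verbatim from Pollack's work (Lemma~4 of the cited paper), so there is no internal argument to compare against. Your derivation is the standard one: the Gauss/M\"obius formula for $\pi_q(n)$, the trivial upper bound from $q^n=\sum_{d\mid n}d\,\pi_q(d)\ge n\,\pi_q(n)$, and the tail estimate $\bigl|\sum_{d\mid n,\,d>1}\mu(d)q^{n/d}\bigr|\le\sum_{m=1}^{\lfloor n/2\rfloor}q^m\le\frac{q}{q-1}q^{n/2}\le 2q^{n/2}$ using $q\ge 2$. All steps check out, including the edge case $n=1$ where the tail is empty. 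Nothing to fix.
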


For completeness we presented in Lemma~\ref{lem:PNT-Fq} both-sided inequalities,
however for us only the lower bound is important. 

We also need the following special case of a much more general  result of Pollack~\cite[Theorem~1]{Pol}.

\begin{lemma}\label{lem:Pollack}
Let $\pi_q^*(n)$ be the number of irreducible polynomials of degree $n$ 
over  $\F_q$ of the form
$$
X^n + a_{n-3} X^{n-3} + \ldots + a_1X + a_0 \in \F_q[X].
$$
 Then, for each prime power $q$ and each integer $n\ge 1$, we have
$$
\left|\pi_q^*(n) -  q^{-2} \pi_q(n)\right|\le q^{n - \fl{n/2}/2} +  q^{n -1 - \fl{n/3}} .
$$
\end{lemma}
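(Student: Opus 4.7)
The plan is to reduce the problem to counting primes in an arithmetic progression in the polynomial ring $\F_q[X]$, then invoke the function field analogue of the Dirichlet prime number theorem together with the Weil bound for the associated $L$-functions.

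First, any monic irreducible $f(X)=X^n+a_{n-1}X^{n-1}+\dots+a_0 \in \F_q[X]$ of degree $n\ge 3$ has $a_0\ne 0$, so the reciprocal polynomial $f^*(X):=X^n f(1/X)$ is irreducible of degree $n$ with constant term $1$, and the constraints $a_{n-1}=a_{n-2}=0$ translate into $f^*\equiv 1\pmod{X^3}$. Rescaling to the monic polynomial $g:=a_0^{-1}f^*$, the assignment $f\mapsto (a_0^{-1},g)$ is a bijection from the set counted by $\pi_q^*(n)$ onto pairs $(c,g)$ with $c\in\F_q^*$ and $g$ a monic irreducible of degree $n$ satisfying $g\equiv c\pmod{X^3}$. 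Hence
$$
\pi_q^*(n)=\sum_{c\in\F_q^*}\pi_q(n;X^3,c),
$$
where $\pi_q(n;X^3,c)$ counts monic irreducibles of degree $n$ lying in the residue class $c\bmod X^3$.

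Next, I would expand the indicator of the residue class in terms of Dirichlet characters modulo $X^3$. The principal character contributes, after summing over $c\in\F_q^*$ and using $\phi(X^3)=q^2(q-1)$, the main term
$$
\frac{q-1}{\phi(X^3)}\,\pi_q(n)=q^{-2}\pi_q(n).
$$
Each non-principal character $\chi$ yields a character sum over the monic irreducibles of degree $n$ which, by Weil's theorem for the $L$-function $L(s,\chi)$ over $\F_q[X]$ (a polynomial of degree at most $\deg\mathfrak{f}(\chi)-1$ in $q^{-s}$ whose zeros lie on $|s|=q^{-1/2}$), is bounded by $(\deg\mathfrak{f}(\chi)-1)q^{n/2}$.

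To reach the specific exponents in the stated error, I would then group the non-principal characters modulo $X^3$ by their conductor ($X$, $X^2$, or $X^3$) and treat each family separately, exploiting the combinatorial observation that any reducible polynomial satisfying the coefficient constraints must have an irreducible factor of degree at most $\lfloor n/2\rfloor$, with a finer factorization structure producing the $\lfloor n/3\rfloor$ contribution. The main obstacle I anticipate is matching the precise form of the two stated error terms: a direct application of the Weil bound already yields an error of size $O(q^{n/2+1})$, numerically sharper but of a different shape, so reproducing the exponents $n-\lfloor n/2\rfloor/2$ and $n-1-\lfloor n/3\rfloor$ requires following the more delicate combinatorial argument of Pollack~\cite{Pol} rather than an off-the-shelf invocation of $L$-function bounds.
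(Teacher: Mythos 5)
A preliminary remark: the paper does not prove this lemma at all --- it is imported verbatim as a special case of Pollack~\cite[Theorem~1]{Pol} --- so there is no in-text argument to compare against, and you are reconstructing a black-box citation. Your opening reduction is correct: for $n\ge 2$ an irreducible $f$ of the given shape has $a_0\ne 0$, the map $f\mapsto a_0^{-1}X^n f(1/X)$ is a bijection onto the monic irreducibles $g$ of degree $n$ with $g\equiv c \pmod{X^3}$ for some $c\in\F_q^*$, and orthogonality of the Dirichlet characters modulo $X^3$ isolates the main term $q^{-2}\pi_q(n)$ exactly as you say.

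The gap is that the error term is never actually established, and the mechanism you point to for it is the wrong one. Every character modulo $X^3$ has conductor of degree at most $3$, so the Weil (Riemann hypothesis) bound for the associated $L$-functions gives each prime sum size $O(q^{n/2}/n)$; after the $q^{-2}$ normalisation (note only the $q^2$ characters trivial on $\F_q^*$ survive the sum over $c$) the whole non-principal contribution is $O(q^{n/2})$, and no regrouping by conductor can turn this into exponents like $n-\fl{n/2}/2\approx 3n/4$. Likewise, the observation that a reducible polynomial of degree $n$ has an irreducible factor of degree at most $\fl{n/2}$ is true of \emph{every} reducible polynomial and carries no information about these exponents. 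You then explicitly defer the decisive step to ``the more delicate combinatorial argument of Pollack'', which is precisely the content being asked for: Pollack's method is designed to prescribe a number of coefficients growing with $n$, where the modulus $X^{\ell+1}$ is far too large for the $L$-function approach, and the peculiar shape of his error terms comes from that entirely different argument, not from character sums to the fixed modulus $X^3$. The irony is that your route, pushed through with the standard explicit function-field bound for conductors of degree at most $3$, proves an estimate of order $q^{n/2}$, which is \emph{stronger} than the stated inequality for every $n\ge 3$ (the cases $n\le 2$ are trivial to check by hand) and amply suffices for both applications in the paper. So the proposal is salvageable --- either complete the $L$-function computation and prove the stronger bound, or cite Pollack as the paper does --- but as written the key step is a placeholder pointing at machinery that cannot produce the claimed exponents.
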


Taking $q = 2$ in Lemma~\ref{lem:Pollack} we derive.

\begin{cor}\label{cor:Irred F2}
For any integer $n\ge 4$ we have 
$$
\pi_2^*(n)  \ge \frac{2^n}{16n}. 
$$
\end{cor}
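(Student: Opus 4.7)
The plan is to specialise Lemma~\ref{lem:Pollack} to $q = 2$ and combine it with the lower bound on $\pi_2(n)$ from Lemma~\ref{lem:PNT-Fq}. Setting $q = 2$ in Lemma~\ref{lem:Pollack} and inserting $\pi_2(n) \ge (2^n - 2^{n/2+1})/n$ from Lemma~\ref{lem:PNT-Fq} yields the fully explicit lower bound
$$
\pi_2^*(n) \ge \frac{2^n - 2^{n/2+1}}{4n} - 2^{n - \fl{n/2}/2} - 2^{n-1-\fl{n/3}}.
$$

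I want this right-hand side to be at least $2^n/(7n)$. Using $\tfrac14 - \tfrac17 = \tfrac{3}{28}$, the target inequality rearranges to
$$
\frac{3 \cdot 2^n}{28 n} \;\ge\; \frac{2^{n/2-1}}{n} + 2^{n - \fl{n/2}/2} + 2^{n-1-\fl{n/3}}.
$$
On the right the dominant term is $2^{n - \fl{n/2}/2}$, which is of order $2^{3n/4}$ for even $n$ (and similarly for odd $n$), while the left-hand side is of order $2^n/n$. Consequently the inequality holds for all $n$ beyond some explicit threshold $N_0$, and a short numerical calculation (with $N_0$ roughly $30$) pins this threshold down.

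For the finitely many remaining cases $4 \le n < N_0$, I will verify $\pi_2^*(n) \ge 2^n/(7n)$ directly. Since $\pi_2^*(n)$ is a nonnegative integer and $2^n/(7n) < 1$ when $n \in \{4,5\}$, it suffices there to exhibit a single irreducible polynomial of the prescribed shape (for instance $X^4+X+1$ and $X^5+X^2+1$). For the intermediate values one enumerates polynomials of the form $X^n + a_{n-3} X^{n-3} + \dots + a_0 \in \F_2[X]$ and counts the irreducible ones, either by hand using low-degree factor tests or by machine.

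The main obstacle is the weakness of Lemma~\ref{lem:Pollack} at $q = 2$: the error term $2^{n - \fl{n/2}/2}$ is of order $2^{3n/4}$ and thus swamps the expected main term $\pi_2(n)/4 \sim 2^{n-2}/n$ unless $n$ is at least moderately large. This is precisely what forces the split into an asymptotic argument supplied by the lemma and a finite check for the small values of $n$, rather than a single uniform argument handling every $n\ge 4$ at once.
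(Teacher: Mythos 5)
Your proposal follows the paper's proof exactly: specialise Lemma~\ref{lem:Pollack} to $q=2$, insert the lower bound for $\pi_2(n)$ from Lemma~\ref{lem:PNT-Fq}, and dispose of the remaining small degrees by direct computation. The one substantive difference is the crossover point, and there your instinct is the sounder one: the paper asserts the explicit bound already gives the result for $n\ge 14$, but at $n=14$ the error term $2^{n-\fl{n/2}/2}=2^{10.5}\approx 1448$ already dwarfs the main term $2^{14}/56\approx 293$, and the inequality $\frac{3\cdot 2^n}{28n}\ge \frac{2^{n/2-1}}{n}+2^{n-\fl{n/2}/2}+2^{n-1-\fl{n/3}}$ first holds at $n=34$ (it still fails at $n=33$), so your larger finite verification range, roughly $4\le n\le 33$, is what a correct version of this argument actually requires.
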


\begin{proof} From Lemmas~\ref{lem:PNT-Fq} and~\ref{lem:Pollack} we derive 
$$
\pi_2^*(n)  \ge \frac{2^n}{4n} -   \frac{2^{n/2+1}}{4 n}  
- 2^{n - \fl{n/2}/2} - 2^{n -1 - \fl{n/3}}
$$
and for $n \ge 30$ the result is immediate from the bound.  

For $4 \le n \le 29$ we compute $\pi_2^*(n)$ directly and verify the 
desired bound.
\end{proof}

\section{Proof of Theorem~\ref{thm:InH-asym}}

Let  %%$p\nmid H$ 
\begin{equation}
\label{eq:p<H}
p < H
\end{equation}
be a prime to be chosen later and let $\cJ_p\subseteq \F_p[X]$ the set of irreducible polynomial of degree $n = 2k+1$ with zero coefficients of the terms $X^{2k}$ and $X^{2k-1}$. 
Note that  since $n \ge 11$ we have $k \ge 5$.

By Lemmas~\ref{lem:PNT-Fq} and~\ref{lem:Pollack}, we have
\begin{equation}
\label{eq:Card Jp}
\#\cJ_p = \pi_p^*(n)= \frac{p^{2k-1}}{2k+1} + O\left(p^{3k/2+1}\right) 
\end{equation}
with some absolute implied constant. 

We lift all $g\in \cJ_p$ into $\cF$. For every $j\in [2,k+1]$, the number of choices for 
$$
f_{2k-j}\equiv g_{2k-j} \mod p \mand f_{2k-j}\in\{0,1,\dots, H^{j-1}-1\} 
$$
is at least  
\begin{equation}
\label{eq:Coeff f_j}
\fl{H^{j-1}/p}\geq  H^{j-1}/p -1.
\end{equation}
Since $p\nmid H$,  for every $j\in [2,k]$, the number of choices for 
$$
f_{k-j}\equiv g_{k-j} \mod p   \mand \frac{f_{k-j}}{H^{j-1}}\in\{0,1,\dots, H^{k-j+1}-1\} 
$$
is at least 
\begin{equation}
\label{eq:Coeff f_2k-j}
\ccr{\fl{H^{k-j+1}/p}\geq H^{k-j+1}/p -1.}
\end{equation}
Thus for a fixed $g\in\cJ_p$, the total number of the above polynomials 
$f \in \Z[X]$ to which it can be lifted is at least
\begin{align*}
\prod_{j=2}^{k+1}& \frac{H^{j-1}-\ccr{p}}{p} \cdot  \prod_{j=2}^{k}\frac{H^{k-j+1}-\ccr{p}}{p} \\
&= \frac{1}{p^{2k-1}}H^{\frac{k(k+1)}{2}+ \frac{k(k-1)}{2}} \prod_{j=2}^{k+1}
\left(1-\frac{\ccr{p}}{H^{j-1}}\right)  \cdot  \prod_{j=2}^{k}\left(1-\frac{\ccr{p}}{H^{k-j+1}}\right) \\
& =\frac{H^{k^2}}{p^{2k-1}}  \(1+O\(kp/H\)\)  
\end{align*}  
\ccr{(we emphasise that  the implied constant is absolute here). }
Thus, by Lemma~\ref{lem: B<->F} and the equation~\eqref{eq:Card Jp}, we have 
\begin{align*}
 \# \cI_n^+(H)  &\ge  \#\left\{\charpoly(M):~M\in [0,H]^{n\times n} \right\}\\
&  \geq \#\cJ_p \cdot \frac{H^{k^2}}{p^{2k-1}}  \(1+O\(k\frac{p}{H}\)\) \\
& \geq \(\frac{p^{2k-1}}{2k+1} + O\(p^{3k/2+1}\) \)  \cdot \frac{H^{k^2}}{p^{2k-1}}  \(1+O\(k\frac{p}{H}\)\) \\
&  \geq 
  \frac{H^{k^2}}{2k+1} +O\(pH^{k^2-1}+\frac{H^{k^2}}{p^{k/2-2}} +
  k \frac{H^{k^2-1}}{p^{k/2-3}} \).
\end{align*}

We now choose  $p$ as the smallest prime with $p > 2 H^{2/(k-2)}$
and also  observe that   by the prime number theorem we have $p = O\(H^{2/(k-4)}\)$.
\ccr{Adjusting the absolute constant in the desired bound, we can   assume that $H$ is large enough 
so that such smallest prime $p$ satisfies~\eqref{eq:p<H} as well. }
Thus, this choice  makes the first \ccr{error} term in the above inequality dominate
the other two \ccr{error} terms. After simple calculations, we obtain the result.

\section{Proof of Theorem~\ref{thm:InH-expl}}
We now apply the previous argument with $p=2$, but we additionally note that since $H$ is odd then 
instead \ccr{of~\eqref{eq:Coeff f_j} and~\eqref{eq:Coeff f_2k-j} we obtain 
$$
\fl{H^{j-1}/2} = (H^{j-1}-1)/2, \qquad j\in [2,k+1],
 $$ 
 and
 $$
  \fl{H^{k-j+1}/2} = (H^{k-j+1}-1)/2, \qquad j\in [2,k],
$$
respectively.}
We also use Corollary~\ref{cor:Irred F2} instead of~\eqref{eq:Card Jp}.
%%\cco{\st{(however for the sake of elegance of the final result we replace $7$ with $8$ in 
%%the denominator)}}.
Thus, we obtain 
\begin{align*}
 \# \cI_n^+(H)  &\ge  \#\left\{\charpoly(M):~M\in [0,H]^{n\times n}, \ \ccr{\charpoly(M) \ \text{irred.}} \right\}\\
&  \geq \frac{2^{2k+1}}{\ccr{16}(2k+1)}  \cdot
\prod_{j=2}^{k+1} \frac{H^{j-1}-1}{2} \cdot  \prod_{j=2}^{k}\frac{H^{k-j+1}-1}{2} \\
&  \ccg{=} \frac{1}{\ccr{4}(2k+1)}  \cdot
\prod_{j=2}^{k+1} \(H^{j-1}-1\) \cdot  \prod_{j=2}^{k} \(H^{k-j+1}-1\) \\
& = \frac{1}{\ccr{4}(2k+1)}  \cdot  \(H^{k}-1\) 
\prod_{j=1}^{k-1} \(H^{j}-1\) ^2 \\
&  \geq \frac{1}{\ccr{4}(2k+1)}  \cdot  \(H-1\)^k
\prod_{j=1}^{k-1} \(H -1\) ^{2j}  , 
\end{align*}
and the result follows.

\section{Comments}  
We remark that when $H$ is exponentially larger than $n$, 
more precisely, if 
\begin{equation}
\label{eq:Small n}
n \le (1-\varepsilon) \frac{\log H}{\log 2}
\end{equation}
for a fixed $\varepsilon > 0$, then 
a result of Pham and  Xu~\cite[Theorem~1.2] {PhXu} shows that in fact  
almost polynomials in the set $\cF$ in Section~\ref{sec: Set F} are irreducible. Thus
$$
 \# \cI_n^+(H) \ge  (1+o(1)) H^{(n-1)^2/4} ,
 $$
 provided as $H \to \infty$ under the condition~\eqref{eq:Small n}.

\section*{Acknowledgements}  

The authors would like to thank Lior Bary-Soroker  for  very useful discussions and for 
informing them about the work of Pham and  Xu~\cite{PhXu}. 

During the preparation of this  work L.~M. was partially supported by NRDI (National Research Development and Innovation Office, Hungary) grant 
FK142960, and by
the János Bolyai Research Scholarship of the Hungarian Academy of
Sciences; and
I.~S. by the Australian Research Council Grants  DP230100530 and DP230100534.

\end{document}